\theoremstyle{definition}
\newtheorem{thm}{Theorem}[section]
\newtheorem{dfn}[thm]{Definition}
\newtheorem{lemma}[thm]{Lemma}
\newtheorem{prop}[thm]{Proposition}
\newtheorem{rmk}[thm]{Remark}
\def\id{\mathrm{id}}
\def\ccc{\mathbb{C}}
\def\zz{\mathbb{Z}}
\def\rr{\mathbb{R}}
\def\pp{\mathbb{P}}
\def\clo{\mathcal{O}}
\def\frI{\mathfrak{I}}
\def\pt{\partial}
\def\bpt{\bar{\pt}}
\def\ddb{\pt\bpt}
\def\ud{\mathrm{d}}
\def\bea{\begin{equation}}
\def\eea{\end{equation}}
\def\<{{\langle}}
\def\>{{\rangle}}
\begin{document}

\title{Remarks on Generalized Calabi-Gray Manifolds}
\author{Teng Fei}

\date{}

\maketitle{}

\begin{abstract}
Generalized Calabi-Gray manifolds are non-K\"ahler complex manifolds with very explicit geometry yet not being homogeneous. In this note, we demonstrate that how generalized Calabi-Gray manifolds can be used to answer some questions in non-K\"ahler geometry.
\end{abstract}

\section{Introduction}

Generalized Calabi-Gray manifolds are introduced in \cite{fei2016} as new examples of complex non-K\"ahler manifolds with very accessible geometry. Their constructions depend on a nonconstant holomorphic map $\varphi:Y\to\ccc\pp^1$ and a hyperk\"ahler manifold $M$. When $\varphi$ is the Gauss map of an immersed minimal surface $Y$ in a flat 3-torus and that $M$ is of real dimension 4, we recover the classical constructions due to Calabi \cite{calabi1958} and Gray \cite{gray1969}. Generalized Calabi-Gray manifolds have found their applications in solving the Hull-Strominger system \cite{fei2017b,fei2021,fei2020d}, the Anomaly flow \cite{fei2021d}, and building new models in theoretical physics \cite{chen2019} etc.

Noticeably, there has been an uprising interest in complex non-K\"ahler geometry in recent years. Many new ideas and conjectures have been proposed, and many exciting conferences are held. This particular piece of work is inspired by author's discussion with his colleagues at one of the \emph{Geometry \& TACoS} workshops.\footnote{For more information, see https://events.dimai.unifi.it/tacos/sessions/geometry-and-physics-of-non-kahler-calabi-yau/.} It turns out that generalized Calabi-Gray manifolds (with slight modification in some cases) can be used to answer many newly proposed questions in non-K\"ahler geometry. 

This paper is organized as follows. In Section 2, we first recall the construction of the generalized Calabi-Gray manifolds and their relation with twistor spaces. Then we derive some existence and non-existence results about special metrics on generalized Calabi-Gray manifolds. In Section 3 and 4, we demonstrate that generalized Calabi-Gray manifolds (with slight modification if necessary) provide counterexamples to two questions in complex non-K\"ahler geometry. Along the way, we also prove the existence of balanced metrics on suspension of balanced manifolds with invariant volume forms and present a brief list of interesting questions.

The author would like to thank all the participants of the \emph{Geometry \& TACoS} workshop for their inspiring discussions. The author would also like to thank the Simons Foundation for the generous funding, as this work is partially supported by the Simons Collaboration Grant 853806.

\section{The Generalized Calabi-Gray Manifolds}

Let us recall the construction of generalized Calabi-Gray manifolds from \cite{fei2016}, with some extensions to a broader setting. 

Let $M$ by a compact hyperk\"ahler manifold, namely that $M$ is equipped with a Riemannian metric $g$ and three integrable complex structures $I$, $J$, and $K$ satisfying the quaternion relation $IJK=-1$, such that all of $I$, $J$, and $K$ together with $g$ define K\"ahler metrics on $M$. In fact, one gets an $S^2=\ccc\pp^1$ worth of K\"ahler structures on $M$ as follows. Let $\omega_I$, $\omega_J$, and $\omega_K$ be the K\"ahler forms associated to $I$, $J$, and $K$ respectively. For any point $(a,b,c)\in S^2=\{(a,b,c)\in\rr^3:a^2+b^2+c^2=1\}$, the complex structure $aI+bJ+cK$ defines a K\"ahler metric on $M$, whose K\"ahler form is simply $a\omega_I+b\omega_J+c\omega_K$.

It is very natural to consider the twistor space $Z$ of a hyperk\"ahler manifold $M$ (see for example \cite{hitchin1987}), which encodes the whole $\ccc\pp^1$-family of K\"ahler structures on $M$. As a smooth manifold, the twistor space is simply the product $Z=M\times\ccc\pp^1$. However, $Z$ is equipped with a tautological yet twisted integrable complex structure $\frI$, which can be described as follows. For any point $z=(m,(a,b,c))\in M\times\ccc\pp^1=Z$, we have
\[\frI_z=(aI+bJ+cK)_m\oplus j_{(a,b,c)},\]
where $j$ is the canonical complex structure on $\ccc\pp^1$. It is well-known that one can recover all the information about $M$ from the complex geometry of $Z$. The twistor space $Z$ has the following properties.

\begin{prop}\label{twi1}(See for example \cite{hitchin1987})
\begin{enumerate}
\item The natural projection $\pi:Z=M\times\ccc\pp^1\to\ccc\pp^1$ is holomorphic.
\item For any point $m\in M$, $\{m\}\times\ccc\pp^1\subset M\times\ccc\pp^1=Z$ is a complex submanifold of $(Z,\frI)$.
\item The bundle $\bigwedge^2F^*_\pi\otimes\pi^*\clo(2)$ over $Z$ has a global holomorphic section which defines a holomorphic symplectic form over each fiber of $\pi$. Here $\clo(1)$ is the hyperplane bundle over $\ccc\pp^1$ and $F^*_\pi$ is the relative cotangent bundle of the holomorphic fibration $\pi:Z\to\ccc\pp^1$.
\end{enumerate}
\end{prop}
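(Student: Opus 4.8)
\emph{Proof strategy.} The real content of the proposition is that the tautological almost complex structure $\frI$ on $Z=M\times\ccc\pp^1$ is integrable; once that is in hand, statements (1)--(3) come out almost for free. The plan is to prove integrability by the Newlander--Nirenberg theorem, computing the Nijenhuis tensor $N(X,Y)=[\frI X,\frI Y]-\frI[\frI X,Y]-\frI[X,\frI Y]-[X,Y]$ on the two natural families of vector fields on the product $Z$: fields $X$ pulled back from $M$ (tangent to the fibres of $\pi$, constant in the $\ccc\pp^1$ variable) and fields $V$ pulled back from $\ccc\pp^1$ (tangent to the sections $\{m\}\times\ccc\pp^1$, constant in the $M$ variable); brackets of such product fields vanish. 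Writing $\clx_\zeta=(aI+bJ+cK)_m$ for the fibrewise part of $\frI$ over $\zeta=(a,b,c)$, so $\frI=\clx\oplus j$, there are three cases.

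First, $N(V,W)$ for $V,W$ pulled back from $\ccc\pp^1$ is simply the Nijenhuis tensor of $j$, which vanishes because $j$ is integrable. Second, $N(X,Y)$ for $X,Y$ pulled back from $M$ reduces, on restriction to a fibre $M\times\{\zeta\}$, to the Nijenhuis tensor of the almost complex structure $aI+bJ+cK$; since $I,J,K$ are parallel for the Levi--Civita connection of $g$ (this is what ``hyperk\"ahler'' means), so is $aI+bJ+cK$ for constant $a,b,c$, and a parallel almost complex structure has vanishing Nijenhuis tensor. Third, for the mixed term one finds, using that $\clx$ depends on $\zeta$ but $X$ does not, that $[X,jV]=0$ while $[\clx X,V]=-(\p_V\clx)X$ and $[\clx X,jV]=-(\p_{jV}\clx)X$; collecting the four terms yields $N(X,V)=\big(\clx\circ\p_V\clx-\p_{jV}\clx\big)X$. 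Thus integrability is equivalent to the single identity $\clx\circ\p_V\clx=\p_{jV}\clx$ for every $V\in T\ccc\pp^1$, which is precisely the statement that $\zeta\mapsto\clx_\zeta$ is holomorphic as a map from $\ccc\pp^1$ into the two-sphere $\{aI+bJ+cK:a^2+b^2+c^2=1\}$ of complex structures (whose natural complex structure at $\clx$ is $A\mapsto\clx A$). I would check this identity by a short direct computation in an affine coordinate $\zeta$ on $\ccc\pp^1$ using the quaternion relations, the point being that the hypothesis $IJK=-1$ fixes the orientation so that one lands on $j$ and not $-j$. \emph{This mixed-case identity is the only genuine obstacle; everything else is bookkeeping.}

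Granting integrability, (1) follows because $d\pi$ kills the $TM$-summand and is the identity on the $T\ccc\pp^1$-summand, so $d\pi\circ\frI=j\circ d\pi$. For (2), at each point of $\{m\}\times\ccc\pp^1$ the tangent space is the summand $0\oplus T_\zeta\ccc\pp^1$, which is $\frI$-invariant; a real submanifold with complex-linear tangent spaces inside an integrable complex manifold is a complex submanifold.

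Finally, for (3): the relative tangent bundle $F_\pi$ restricts on the fibre $M\times\{\zeta\}$ to $(TM,\clx_\zeta)$, and I would produce the section from the classical fact that $\omega_J+i\omega_K$ is a holomorphic symplectic $(2,0)$-form for the complex structure $I$ on a hyperk\"ahler manifold. Rotating $(I,J,K)$ by $\SU(2)$, the holomorphic symplectic form attached to $\clx_\zeta$ depends on the affine coordinate as a polynomial of degree two, schematically $\Omega_\zeta=(\omega_J+i\omega_K)+2\zeta\,\omega_I-\zeta^2(\omega_J-i\omega_K)$; this quadratic dependence is exactly why the twist by $\pi^*\clo(2)$ is required in order for the fibrewise forms to patch into one global section over $Z$. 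It then remains to verify that $\Omega_\zeta$ has type $(2,0)$ with respect to $\clx_\zeta$ (quaternion relations again), that the resulting object is holomorphic over $Z$ (fibrewise $\clx_\zeta$-holomorphic and polynomial, hence holomorphic, in $\zeta$, together with (1)), and that it is fibrewise nondegenerate for every $\zeta$ --- the last because nondegeneracy is an $\SU(2)$-invariant condition, $\SU(2)$ acts transitively on $\ccc\pp^1$, and $\omega_J+i\omega_K$ is nondegenerate at $\zeta=0$.
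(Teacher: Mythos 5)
The paper offers no proof of this proposition at all --- it simply defers to the cited reference --- so there is nothing internal to compare against; your sketch is exactly the classical argument of Hitchin et al.\ for twistor spaces of hyperk\"ahler manifolds, and it is correct. The reduction of integrability to the single mixed-term identity $\clx\circ\p_V\clx=\p_{jV}\clx$, its interpretation as holomorphicity of $\zeta\mapsto\clx_\zeta$ into the $2$-sphere of complex structures, and the formula $\Omega_\zeta=(\omega_J+i\omega_K)+2\zeta\,\omega_I-\zeta^2(\omega_J-i\omega_K)$ explaining the $\clo(2)$ twist are all the standard and correct ingredients. The one spot a careful reader will press you on is the holomorphicity of $\Omega$ over the total space $Z$: ``fibrewise holomorphic and polynomial in $\zeta$'' is an assertion in the product coordinates, whereas $\frI$-holomorphic coordinates mix the two factors, so a short $\dbar$-computation (or an appeal to the closedness of the $\omega$'s) is still needed there --- but this is routine and does not affect the soundness of the strategy.
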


Let $2m$ be the complex dimension of the hyperk\"ahler manifold $M$, then we know that $Z$ is a complex manifold of complex dimension $2m+1$. Regarding Hermitian metrics on twistor spaces, we have the following well-known results.

\begin{prop}\label{twi2}~
\begin{enumerate}
\item $Z$ admits balanced metrics, which are Hermitian metrics such that their $2m$-th powers are closed.
\item $Z$ does not admit any K\"ahler metric. In fact more generally, $Z$ does not admit any astheno-K\"ahler metric.
\end{enumerate}
\end{prop}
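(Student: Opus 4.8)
The plan is to handle the two assertions separately, and by quite different methods.

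\smallskip\noindent\emph{Existence of balanced metrics.} I would write one down explicitly. Realizing $\ccc\pp^1=S^2=\{x_1^2+x_2^2+x_3^2=1\}\subset\rr^3$ with round area form $\omega_{FS}$, put on $Z=M\times\ccc\pp^1$
\[
\widetilde\omega:=x_1\,\omega_I+x_2\,\omega_J+x_3\,\omega_K,\qquad \Omega:=\widetilde\omega+\pi^*\omega_{FS},
\]
where $\omega_I,\omega_J,\omega_K$ are pulled back from $M$ and $x_i,\omega_{FS}$ from $\ccc\pp^1$. \emph{Step 1: $\Omega$ is a positive $(1,1)$-form for $\frI$.} Since $\frI$ is block diagonal for $TZ=TM\oplus T\ccc\pp^1$ and $\Omega$ has no mixed components, restricting on the $M$-block to the K\"ahler form of the complex structure $x_1I+x_2J+x_3K$ and on the $\ccc\pp^1$-block to $\omega_{FS}$, it is a positive $(1,1)$-form. \emph{Step 2: $d\Omega^{2m}=0$.} As $\omega_{FS}$ is top-degree on the curve $\ccc\pp^1$, $(\pi^*\omega_{FS})^2=0$, so $\Omega^{2m}=\widetilde\omega^{2m}+2m\,\widetilde\omega^{2m-1}\wedge\pi^*\omega_{FS}$. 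Here $\widetilde\omega^{2m}=(2m)!\,\pi_M^*\vol_g$: the restriction of $\widetilde\omega$ to each tangent space $T_pM$ is the K\"ahler form of $g$ for $x_1I+x_2J+x_3K$, whose $2m$-th power is $(2m)!\,\vol_g$ \emph{independently of $(x_1,x_2,x_3)$}; hence $\widetilde\omega^{2m}$ is a pullback of a top form on $M$, so it is closed. For the other term, $d\widetilde\omega=\sum_i dx_i\wedge\omega_i$ (using $d\omega_I=d\omega_J=d\omega_K=0$), so every summand of $d(\widetilde\omega^{2m-1}\wedge\pi^*\omega_{FS})$ contains a factor $dx_i\wedge\pi^*\omega_{FS}=\pi^*(dx_i\wedge\omega_{FS})=0$, a $3$-form on the real surface $\ccc\pp^1$. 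Hence $d\Omega^{2m}=0$.

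\smallskip\noindent\emph{Non-existence of K\"ahler metrics.} The cleanest route is to exhibit a failure of Hodge symmetry. Using the holomorphic fibration $\pi$ and the relative holomorphic symplectic form of Proposition~\ref{twi1}, which gives $K_{Z/\ccc\pp^1}\cong\pi^*\clo(-2m)$, one computes (assuming $h^{1,0}(M)=0$, the irreducible case) $h^{2,0}(Z)=\dim H^0(Z,\Omega^2_Z)=0$: the fibrewise holomorphic $2$-form is $\pi^*\clo(2)$-twisted, so $\pi_*\Omega^2_{Z/\ccc\pp^1}=\clo(-2)$ has no sections, and $\pi_*\Omega^1_{Z/\ccc\pp^1}=0$. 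On the other hand the Leray spectral sequence for $\pi$ — degenerate, since the base is a curve and $R^{\mathrm{odd}}\pi_*\clo_Z=0$ — together with relative Serre duality gives $R^2\pi_*\clo_Z\cong\clo(2)$, hence $h^{0,2}(Z)=\dim H^0(\ccc\pp^1,\clo(2))=3$. Since $h^{2,0}\neq h^{0,2}$, $Z$ carries no K\"ahler metric. (When $M$ is a $4$-torus these numbers must be recomputed, or one simply cites the classical non-K\"ahlerness of twistor spaces, cf.\ \cite{hitchin1987}.)

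\smallskip\noindent\emph{Non-existence of astheno-K\"ahler metrics.} This is where I expect the real work to be. Hodge symmetry alone cannot do it — an astheno-K\"ahler metric ($\partial\bpt(\omega^{n-2})=0$, $n=2m+1$) need not satisfy $h^{2,0}=h^{0,2}$ — so one must use the geometry of $\pi$ directly. The plan is an integration-by-parts argument built around the closed semi-positive $(1,1)$-form $\beta:=\pi^*\omega_{FS}$, which is degenerate with $\beta^2=0$, yet satisfies $\beta\wedge\eta>0$ pointwise for every strictly positive $(n-1,n-1)$-form $\eta$. Given an astheno-K\"ahler $\omega$, using that $\beta$ is $d$-closed (so $\partial\bpt(\beta\wedge\gamma)=\beta\wedge\partial\bpt\gamma$), the identity $\partial\bpt(\omega^{n-2})=(n-2)(n-3)\,\omega^{n-4}\wedge\partial\omega\wedge\bpt\omega+(n-2)\,\omega^{n-3}\wedge\partial\bpt\omega$, and the balanced metric $\Omega$ from the first part, one wants to force a manifestly positive integral built from $\beta,\Omega$ and powers of $\omega$ to vanish — the vanishing coming from $\beta^2=0$ and, on the product $Z=M\times\ccc\pp^1$, from the fact that any two forms pulled back from the curve already wedge to zero in degree $>2$. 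The obstacle is that the naive manipulations only yield formal identities; extracting a genuine contradiction requires exploiting the degeneracy of $\beta$ and the fibration structure in an essential way, and even the complex-dimension-$3$ case (where astheno-K\"ahler means $\partial\bpt\omega=0$) is already non-trivial.
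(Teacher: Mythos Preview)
Your proof of (a) is correct and is exactly the paper's approach, carried out in more detail than the paper bothers to write.

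For (b), your Hodge-number argument for the non-K\"ahlerness is a legitimate alternative route in the irreducible case, but as you yourself note it does not cover all hyperk\"ahler $M$, and in any case it is unnecessary: K\"ahler implies astheno-K\"ahler, so once the astheno-K\"ahler case is handled there is nothing more to do.

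The genuine gap is in your astheno-K\"ahler argument. You correctly identify that the method should be integration by parts against the natural metric $\Omega$, but you then go looking for the contradiction in the wrong place (degeneracy of $\beta=\pi^*\omega_{FS}$, the identity $\beta^2=0$, expanding $\partial\bpt(\omega^{n-2})$ in lower-order pieces). The missing ingredient is a single curvature computation: for the metric $\Omega=\omega_{FS}+\widetilde\omega$ you wrote down in part (a), one has
\[
i\partial\bpt\,\Omega \;=\; \pi^*\omega_{FS}\wedge\Omega,
\]
a formula the paper imports from \cite[Theorem 5.4]{fei2016} (or \cite[Theorem 3.2]{deschamps2017}); it comes from decomposing your expression $d\widetilde\omega=\sum_i dx_i\wedge\omega_i$ into types on $(Z,\frI)$ and differentiating once more. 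Granting this, the argument is one line: if $\omega_1$ is astheno-K\"ahler then
\[
0=\int_Z i\partial\bpt(\omega_1^{2m-1})\wedge\Omega=\int_Z \omega_1^{2m-1}\wedge i\partial\bpt\,\Omega=\int_Z \omega_1^{2m-1}\wedge\pi^*\omega_{FS}\wedge\Omega>0,
\]
since $\pi^*\omega_{FS}\wedge\Omega=\pi^*\omega_{FS}\wedge\widetilde\omega$ is a nowhere-zero nonnegative $(2,2)$-form and $\omega_1^{2m-1}$ is strictly positive. So your plan was right, but the ``real work'' you anticipated is not a clever use of $\beta^2=0$; it is the explicit evaluation of $i\partial\bpt\,\Omega$, after which no further manipulation is needed.
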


\begin{proof}~
\begin{enumerate}
\item The natural Hermitian metric $\omega$ (the product metric) on $Z$ takes the form $\omega=\omega_{FS}+a\omega_I+b\omega_J+c\omega_K$, where $\omega_{FS}$ is the Fubini-Study metric on $\ccc\pp^1$. It is straightforward to check that $\ud(\omega^{2m})=0$.
\item Recall that a Hermitian metric $\omega$ on a complex $n$-fold is called astheno-K\"ahler if $i\ddb(\omega^{n-2})=0$. Clearly K\"ahler metrics are special cases of astheno-K\"ahler metrics. Suppose that $Z$ admits an astheno-K\"ahler metric $\omega_1$ satisfying $i\ddb(\omega_1^{2m-1})=0$.Then by integration by part, we get
    \begin{equation}
    0=\int_Zi\ddb(\omega_1^{2m-1})\wedge\omega=\int_Z\omega_1^{2m-1}\wedge i\ddb\omega.\label{ibp}
    \end{equation}
    On the other hand, using the computation given in \cite[Theorem 5.4]{fei2016} or \cite[Theorem 3.2]{deschamps2017}, we have
    \[i\ddb\omega=\omega_{FS}\wedge\omega,\]
    hence $\omega_1^{2m-1}\wedge i\ddb\omega$ is a positive top form, contradiction!
\end{enumerate}
\end{proof}

Now let $\varphi:Y\to\ccc\pp^1$ be a nonconstant holomorphic map from a compact complex manifold $Y$ to $\ccc\pp^1$, the base of the twistor family $\pi:Z\to\ccc\pp^1$. Using $\varphi$, one can pullback the twistor fibration $\pi$ to a holomorphic fibration $p:X\to Y$, fitting into the following commuting square:
\[\xymatrix{X=\varphi^*Z\ar[r]\ar[d]_p&Z\ar[d]^\pi\\ Y\ar[r]^\varphi&\ccc\pp^1}\]
In this note, we shall call such an $X$ a \emph{greatly generalized Calabi-Gray manifold}\footnote{Greatly generalized Calabi-Gray manifolds are called \emph{higher analogs of twistor spaces} in \cite{lin2017}.}. Just like the twistor space $Z$, the space $X$, as a smooth manifold, is simply the product $X=M\times Y$ with a twisted complex structure. It is named after Calabi and Gray because when $Y$ is a minimal surface in a flat $T^3$ and $\varphi$ its Gauss map, and that $M$ is a hyperk\"ahler 4-manifold like $T^4$ or a K3 surface, the corresponding $X$ recovers the classical construction of Calabi \cite{calabi1958} and Gray \cite{gray1969}. Greatly generalized Calabi-Gray manifold in this broad sense also include the examples studied by LeBrun \cite{lebrun1999} where $\varphi$ is a polynomial map from $\ccc\pp^1$ to itself.

Let $X$ be a greatly generalized Calabi-Gray manifold. Very much like Proposition \ref{twi1}, we have
\begin{prop}\label{gtwi1}~
\begin{enumerate}
\item The natural projection $p:X\to Y$ is holomorphic.
\item For any point $m\in M$, $\{m\}\times Y\subset M\times Y=X$ is a complex submanifold of $X$.
\item The canonical bundle $K_X$ of $X$ can be computed as
\begin{equation}\label{can}
K_X\cong p^*(K_Y\otimes \varphi^*\clo(-2m)).
\end{equation}
\end{enumerate}
\end{prop}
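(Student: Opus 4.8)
The plan is to verify each of the three assertions by transporting the corresponding statement of Proposition \ref{twi1} across the Cartesian square defining $X=\varphi^*Z$. Parts (1) and (2) are essentially formal. For (1), the map $p:X\to Y$ is by definition the base-change of the holomorphic map $\pi:Z\to\ccc\pp^1$ along the holomorphic map $\varphi:Y\to\ccc\pp^1$; since $X$ is given the pulled-back complex structure, $p$ is holomorphic by construction (equivalently, one may write the twisted complex structure on $X=M\times Y$ explicitly as $\frI'_{(m,y)}=(aI+bJ+cK)_m\oplus j^Y_y$ with $(a,b,c)=\varphi(y)$ and observe that $p$ is the projection to the second factor, which manifestly intertwines $\frI'$ with the complex structure of $Y$). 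For (2), fixing $m\in M$, the slice $\{m\}\times Y$ is the preimage under the natural map $X\to Z$ of the twistor line $\{m\}\times\ccc\pp^1$, which is a complex submanifold of $Z$ by Proposition \ref{twi1}(2); alternatively, restricting the formula for $\frI'$ to $\{m\}\times Y$ shows the complex structure there is exactly that of $Y$, so the inclusion is holomorphic.

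The substantive point is (3), the computation of $K_X$. First I would use the holomorphic fibration $p:X\to Y$ to write the relative cotangent sequence
\[
0\to p^*\Omega^1_Y\to \Omega^1_X\to \Omega^1_{X/Y}\to 0,
\]
which gives $K_X\cong p^*K_Y\otimes\det\Omega^1_{X/Y}$. Since $p$ is the base-change of $\pi$, the relative cotangent bundle is the pullback of the relative cotangent bundle of $\pi$: writing $f:X\to Z$ for the top horizontal map, $\Omega^1_{X/Y}\cong f^*\Omega^1_{Z/\ccc\pp^1}=f^*F^*_\pi$. Hence $\det\Omega^1_{X/Y}\cong f^*\bigl(\bigwedge^{2m}F^*_\pi\bigr)$, and it remains to identify $\bigwedge^{2m}F^*_\pi$ as a line bundle on $Z$.

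Here is where Proposition \ref{twi1}(3) enters: the existence of a nowhere-vanishing holomorphic section of $\bigwedge^2F^*_\pi\otimes\pi^*\clo(2)$ restricting to the holomorphic symplectic form on each $2m$-dimensional fiber shows that $\bigl(\bigwedge^2F^*_\pi\bigr)^{\otimes m}\otimes\pi^*\clo(2m)\cong\bigwedge^{2m}F^*_\pi\otimes\pi^*\clo(2m)$ is trivial — the top exterior power of a symplectic form is a volume form, nowhere zero — so $\bigwedge^{2m}F^*_\pi\cong\pi^*\clo(-2m)$. (One must check the symplectic-form-to-volume-form step: raising the fiberwise symplectic $2$-form to the $m$-th power yields a fiberwise holomorphic volume form, i.e.\ a trivialization of $\bigwedge^{2m}F^*_\pi$ twisted by $\pi^*\clo(2m)$, using $(\bigwedge^2 V^*)^{\otimes m}\to \bigwedge^{2m}V^*$ for a $2m$-dimensional vector space $V$.) Combining, $\det\Omega^1_{X/Y}\cong f^*\pi^*\clo(-2m)=p^*\varphi^*\clo(-2m)$ by commutativity of the square, and therefore
\[
K_X\cong p^*K_Y\otimes p^*\varphi^*\clo(-2m)=p^*\bigl(K_Y\otimes\varphi^*\clo(-2m)\bigr),
\]
which is \eqref{can}. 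I expect the only genuine subtlety to be the base-change compatibility $\Omega^1_{X/Y}\cong f^*F^*_\pi$ together with the passage from the fiberwise symplectic form to the determinant line bundle; everything else is bookkeeping with the cotangent exact sequence.
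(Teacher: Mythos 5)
Your proposal is correct and complete. Note that the paper itself offers no proof of this proposition --- it is stated as the evident analogue of Proposition \ref{twi1} and left to the reader --- so there is no argument in the text to compare against; your route (holomorphy of $p$ from the explicit twisted complex structure, the relative cotangent sequence $0\to p^*\Omega^1_Y\to\Omega^1_X\to\Omega^1_{X/Y}\to 0$, base-change compatibility $\Omega^1_{X/Y}\cong f^*F^*_\pi$, and trivialization of $\bigwedge^{2m}F^*_\pi\otimes\pi^*\clo(2m)$ by the $m$-th power of the fiberwise holomorphic symplectic form) is exactly the standard one and is surely what the author intends. Two small points of hygiene. First, in part (2) the ``preimage of the twistor line'' phrasing needs a transversality remark to be a proof on its own (here $f=(\id_M,\varphi)$ is visibly transverse to $\{m\}\times\ccc\pp^1$ since $df$ surjects onto the $TM$ factor), but your alternative direct argument --- that $\frI'$ preserves $\{0\}\oplus T_yY$ and restricts there to $j^Y$ --- is cleaner and already suffices. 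Second, the displayed identification $\bigl(\bigwedge^2F^*_\pi\bigr)^{\otimes m}\otimes\pi^*\clo(2m)\cong\bigwedge^{2m}F^*_\pi\otimes\pi^*\clo(2m)$ is not literally an isomorphism of bundles ($\bigwedge^2F^*_\pi$ has rank $\binom{2m}{2}$); what you want, and what your parenthetical correctly describes, is that the natural wedge map $(\bigwedge^2V^*)^{\otimes m}\to\bigwedge^{2m}V^*$ sends the $m$-th tensor power of the symplectic section to a nowhere-vanishing section of $\bigwedge^{2m}F^*_\pi\otimes\pi^*\clo(2m)$, whence $\bigwedge^{2m}F^*_\pi\cong\pi^*\clo(-2m)$. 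With that phrasing tightened, the argument is airtight.
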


As a generalization of Proposition \ref{twi2}, the following statements hold.

\begin{prop}\label{gtwi2}(cf. \cite{lin2017})
\begin{enumerate}
\item If $Y$ admits a balanced metric, so does $X$.
\item $X$ cannot admit an astheno-K\"ahler and a plurisubharmonic metric at the same time. In particular, $X$ is always non-K\"ahler.
\end{enumerate}
\end{prop}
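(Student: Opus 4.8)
The plan is to prove the two parts of Proposition \ref{gtwi2} separately, mirroring the structure of the proof of Proposition \ref{twi2} but carrying the extra base geometry of $Y$ along.

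For part (1), I would construct the balanced metric on $X$ explicitly. Suppose $\omega_Y$ is a balanced metric on $Y$, so $d(\omega_Y^{\dim Y-1})=0$. On $X=M\times Y$ (as a smooth manifold) with the twisted complex structure, I would try the ansatz $\omega_X = C\, p^*\omega_Y + \Phi$, where $\Phi$ is the vertical $(1,1)$-form obtained by pulling back the fiberwise hyperk\"ahler data along $\varphi$; concretely, over a point $y\in Y$ with $\varphi(y)=(a,b,c)\in S^2$, the vertical form is $a\omega_I+b\omega_J+c\omega_K$, which is a positive $(1,1)$-form on $M$ with respect to the complex structure $aI+bJ+cK$. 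The key computational point, analogous to the identity $i\ddb\omega=\omega_{FS}\wedge\omega$ on $Z$, is to determine $d(\Phi^{2m})$ or more precisely to see that $\Phi^{2m}$ is a fixed (closed, parallel) multiple of the Riemannian volume form of $M$ in each fiber, so that $\Phi^{2m}$ behaves like a pullback from $Y$ up to a positive function; then one expands $\omega_X^{m+\dim Y -1}=(C\,p^*\omega_Y+\Phi)^{m+\dim Y-1}$ by the binomial theorem and checks that the only surviving term upon applying $d$, after using $d(\omega_Y^{\dim Y-1})=0$ and the fiberwise closedness of $\Phi^{2m}$, vanishes for $C$ large enough. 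This is essentially the content of the author's separately announced result on balanced metrics on suspensions, specialized to this fibration. The main obstacle here is bookkeeping: making sure the twisted complex structure does not obstruct $\omega_X$ being a genuine positive $(1,1)$-form, which follows because both summands are fiberwise/basewise positive $(1,1)$ and the splitting $TX=F_p\oplus p^*TY$ is $\frI$-invariant by Proposition \ref{gtwi1}(1).

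For part (2), I would argue by contradiction exactly as in Proposition \ref{twi2}(2), but now needing two hypotheses rather than one. Suppose $\omega_1$ is astheno-K\"ahler on $X$, i.e. $i\ddb(\omega_1^{n-2})=0$ where $n=\dim_\C X=2m+\dim Y$, and suppose $\omega$ is a plurisubharmonic metric, i.e. $i\ddb\omega\ge 0$ (a positive, possibly not strictly, $(2,2)$-form). Integration by parts gives
\[
0=\int_X i\ddb(\omega_1^{n-2})\wedge\omega = \int_X \omega_1^{n-2}\wedge i\ddb\omega,
\]
and since $\omega_1^{n-2}$ is a positive $(n-2,n-2)$-form and $i\ddb\omega\ge 0$, the integrand is a nonnegative top form, so it must vanish identically, forcing $i\ddb\omega=0$, i.e. $\omega$ is actually pluriclosed. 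The crux is then to show $X$ admits no pluriclosed (SKT) metric at all, or more precisely that the combination cannot happen; for this I would pull back the twistor identity. Since $p:X\to Y$ is holomorphic and the fibers of $p$ are (up to finite data) twistor-type, I would use that along each fiber direction the same computation as in \cite[Theorem 5.4]{fei2016} applies. The cleaner route: take $\omega$ plurisubharmonic and restrict attention to a generic fiber $F=p^{-1}(y)$, which is biholomorphic to the twistor space $Z$ away from the branch points of $\varphi$; by Proposition \ref{twi2}(2), $Z$ admits no astheno-K\"ahler metric, but one must be careful since $\omega_1|_F$ need not be astheno-K\"ahler on $F$. Instead, I expect the intended argument uses the canonical bundle formula \eqref{can}: a plurisubharmonic metric $\omega$ yields, via $i\ddb\omega\wedge\omega_1^{n-2}=0$ hence $i\ddb\omega=0$, a cohomological constraint, while the relative anticanonical positivity coming from $\varphi^*\clo(-2m)$ being negative along fibers obstructs it.

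The main obstacle, and the step I would spend the most care on, is the passage from ``astheno-K\"ahler $+$ plurisubharmonic both exist'' to a contradiction: the integration-by-parts only forces the plurisubharmonic metric to be pluriclosed, and then one needs a genuine non-existence statement for SKT (or for the simultaneous structures) on $X$. I would handle this by reproducing the fiberwise $i\ddb$ computation of \cite[Theorem 5.4]{fei2016}: writing the vertical part of any Hermitian metric and computing $i\ddb$ of the vertical $(1,1)$-form on each fiber produces a term $\pi^*\omega_{FS}\wedge(\text{vertical})$ which, after pulling back by $\varphi$ and using that $\varphi^*\omega_{FS}$ is a nonnegative but nonzero $(1,1)$-form on $Y$ (nonconstant $\varphi$), is incompatible with $i\ddb\omega=0$ upon wedging with $\omega_1^{n-2}$ and integrating. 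The statement ``in particular $X$ is always non-K\"ahler'' is then immediate, since a K\"ahler metric is simultaneously astheno-K\"ahler and plurisubharmonic (indeed pluriclosed), so the first assertion applies.
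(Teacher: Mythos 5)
Your part (1) is essentially the paper's argument: the paper simply takes the product metric $\omega_Y+a\omega_I+b\omega_J+c\omega_K$ (no large constant $C$ is needed) and checks directly that it is balanced; the point is that $\Phi=a\omega_I+b\omega_J+c\omega_K$ is a purely vertical form with $\Phi^{2m}=(2m)!\,\mathrm{vol}_M$ closed and $\Phi^{2m+1}=0$, so every term in $d(\omega_X^{n-1})$ dies outright. Your exponent should be $2m+\dim Y-1$, and ``vanishes for $C$ large enough'' is not the right mechanism --- the terms vanish identically --- but the construction is the same.

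Part (2) has a genuine gap. You correctly observe that integrating by parts against the \emph{given} plurisubharmonic metric only yields $\omega_1^{n-2}\wedge i\ddb\omega=0$, hence at best that $\omega$ is pluriclosed, and that this is not yet a contradiction. But you then do not supply the step that closes the argument; the several alternatives you sketch are either not carried out or rest on a false identity. In particular, the claim that ``computing $i\ddb$ of the vertical $(1,1)$-form of \emph{any} Hermitian metric produces a term $\pi^*\omega_{FS}\wedge(\text{vertical})$'' is only valid for the tautological fiberwise K\"ahler form $\Phi=a\omega_I+b\omega_J+c\omega_K$, not for the vertical part of an arbitrary metric; and restricting to a fiber $p^{-1}(y)$ is the wrong restriction (the fiber is a hyperk\"ahler manifold, not a twistor space, and $\omega_1$ restricted there carries no astheno-K\"ahler condition). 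The paper's actual mechanism is different: the existence of a plurisubharmonic metric on $X$ is first transferred to $Y$ by restricting to the complex submanifold $\{m\}\times Y$ of Proposition \ref{gtwi1}(b), giving a plurisubharmonic metric $\omega_2$ on $Y$; one then \emph{replaces} the original metric by the product-type metric $\omega=\omega_2+\Phi$ on $X$, for which one has the exact computation
\[
i\ddb\omega=i\ddb\omega_2+\varphi^*\omega_{FS}\wedge\Phi\;\geq\;\varphi^*\omega_{FS}\wedge\Phi,
\]
a $(2,2)$-form that is nonnegative everywhere and nonzero on the open set where $d\varphi\neq 0$. Running the integration by parts against \emph{this} $\omega$ makes $\int_X\omega_1^{n-2}\wedge i\ddb\omega$ strictly positive, which is the contradiction. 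Without the replacement step (or some equivalent quantitative lower bound on $i\ddb$ of a specific plurisubharmonic form), your argument stalls at ``$\omega$ is pluriclosed'' and does not prove the proposition.
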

\begin{proof}~
\begin{enumerate}
\item Let $\omega_Y$ be any balanced metric on $Y$. It is straightforward to check that the product metric $\omega_Y+a\omega_I+b\omega_J+c\omega_K$ on $X=M\times Y$ is a balanced metric.
\item Recall that a Hermitian metric $\omega$ is called \emph{plurisubharmonic} if $i\ddb\omega\geq 0$ as a (2,2)-form. 

    First, we show that $X$ admits a plurisubharmonic metric if and only if $Y$ does so. If $X$ admits a plurisubharmonic metric, so do all complex submanifolds of $X$. In particular, by Proposition \ref{gtwi1}(b), $Y$ also admits a plurisubharmonic metric. On the other hand, let $\omega_2$ be a plurisubharmonic metric on $Y$, then the metric $\omega=\omega_2+a\omega_I+b\omega_J+c\omega_K$ is a plurisubharmonic metric on $X$. 
    
    Next, let us assume that there is also an astheno-K\"ahler metric $\omega_1$ on $X$. Exactly the same calculation as in (\ref{ibp}) yields the contradiction. Here we need that
    \[i\ddb\omega=i\ddb\omega_2+\varphi^*\omega_{FS}\wedge(a\omega_I+b\omega_J+c\omega_K)\geq \varphi^*\omega_{FS}\wedge(a\omega_I+b\omega_J+c\omega_K),\]
    and that $\varphi$ is not a constant so $\omega_1^{2m-1}\wedge\varphi^*\omega_{FS}\wedge(a\omega_I+b\omega_J+c\omega_K)$ is strictly positive on an open subset of $X$.
\end{enumerate}
\end{proof}
\begin{rmk}
Complex manifolds admitting both an astheno-K\"ahler metric and a plurisubharmonic metric form a strictly larger category than K\"ahler manifolds. In fact, there exists a compact complex non-K\"ahler manifold of complex dimension $n$ with a Hermitian metric $\omega$ satisfying $i\ddb(\omega^k)=0$ for $k=1,2,\dots,n-1$. Such metrics are studied in \cite{fino2011,shen2022}.
\end{rmk}

\begin{dfn}
A \emph{generalized Calabi-Gray manifold} $X$ is a greatly generalized Calabi-Gray manifold with the extra condition that
\begin{equation}
K_Y\cong\varphi^*\clo(2m).\label{gcg}
\end{equation}
In view of (\ref{can}), this condition holds if and only if the canonical bundle of $X$ is holomorphically trivial, in which case $X$ is a non-K\"ahler Calabi-Yau manifold.
\end{dfn}

\section{The First Question}

Let $X$ be a compact K\"ahler manifold. It is well-known that if the first Chern class of $X$ vanishes in the real de Rham cohomology group, then its canonical bundle $K_X$ must be torsion, meaning that a finite power of $K_X$ is holomorphically trivial. It is natural to ask if a similar statement holds when $X$ is non-K\"ahler, where we may work with a stronger assumption that the first Chern class of $X$ vanishes in the Bott-Chern cohomology instead. It is noted in \cite{tosatti2015} that Magn\'usson \cite{magnusson2017} gives an example of a compact non-K\"ahler manifold with vanishing first Chern class in the Bott-Chern cohomology whose canonical bundle has infinite order. 

As completely dropping the K\"ahler condition leads to Magn\'usson's counterexample \cite{magnusson2017}, one may ask what happens if we replace the K\"ahler condition by a slightly weaker one. In this spirit, Alexandra Otiman asks during the \emph{Geometry \& TACoS} workshop that if there exists a compact complex manifold with balanced metrics such that its first Chern class vanishes in the Bott-Chern cohomology while its canonical bundle is not torsion.

It turns out such an example exists by considering greatly generalized Calabi-Gray manifolds. To be more specific, we show that
\begin{prop}\label{construction}~\\
For a generic compact Riemann surface of genus $g\geq 5$, there exists a nonconstant holomorphic map $\varphi:Y\to\ccc\pp^1$, such that the associated greatly generalized Calabi-Gray manifold $X$ with $M$ being a hyperk\"ahler 4-manifold satisfies all the conditions in Otiman's question, namely:
\begin{enumerate}
\item $X$ admits a balanced metrics.
\item $c_1(X)=0\in H^{1,1}_{BC}(X;\rr)$.
\item $K_X$ has infinite order.
\end{enumerate}
\end{prop}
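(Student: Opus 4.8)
The plan is to translate the three conditions into statements about the base curve $Y$ and the map $\varphi$, and then realise those via classical Brill--Noether theory. Since $M$ has real dimension $4$ we take $m=1$, so Proposition \ref{gtwi1}(c) gives $K_X\cong p^*\bigl(K_Y\otimes\varphi^*\clo(-2)\bigr)$. Put $L:=\varphi^*\clo(1)$, a line bundle of degree $\deg\varphi$ on $Y$, and $N:=K_Y\otimes L^{-2}$, so that $K_X\cong p^*N$. Condition (1) is immediate: $Y$ is a compact Riemann surface and so carries K\"ahler, hence balanced, metrics, and Proposition \ref{gtwi2}(1) applies directly. For condition (2), note $\deg N=(2g-2)-2\deg\varphi$, so taking $\deg\varphi=g-1$ puts $N$ in $\mathrm{Pic}^0(Y)$; on a curve the $\ddb$-lemma holds and $H^{1,1}_{BC}(Y;\rr)\cong\rr$ via integration, so $c_1^{BC}(N)=0$, whence $c_1^{BC}(K_X)=p^*c_1^{BC}(N)=0$, i.e. $c_1(X)=0$ in $H^{1,1}_{BC}(X;\rr)$. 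For condition (3), by Proposition \ref{gtwi1}(b) the inclusion $\{m\}\times Y\hookrightarrow X$ followed by $p$ is the identity of $Y$, so $p^*\colon\mathrm{Pic}(Y)\to\mathrm{Pic}(X)$ is injective; hence $K_X^{\otimes k}\cong p^*\bigl(N^{\otimes k}\bigr)$ is trivial if and only if $N^{\otimes k}$ is, that is, $K_X$ has infinite order exactly when $N$ is non-torsion in $\mathrm{Pic}^0(Y)$.

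It thus suffices to produce, on a generic curve $Y$ of genus $g\geq 5$, a degree-$(g-1)$ line bundle $L$ whose complete linear system is a base-point-free pencil---so that $|L|$ defines a nonconstant holomorphic map $\varphi\colon Y\to\ccc\pp^1$ of degree $g-1$ with $\varphi^*\clo(1)=L$---and such that $N=K_Y\otimes L^{-2}$ has infinite order. Here is the argument I would run. The Brill--Noether number is $\rho(g,1,g-1)=g-(1+1)\bigl(g-(g-1)+1\bigr)=g-4$, so for a generic $Y$ the locus $W^1_{g-1}(Y)\subset\mathrm{Pic}^{g-1}(Y)$ is non-empty, irreducible, of dimension exactly $g-4\geq 1$, and its generic member $L$ has $h^0(L)=2$ with $|L|$ base-point free---the exceptional locus lies in $W^2_{g-1}(Y)$ (dimension $\rho(g,2,g-1)=g-9$) together with $\{L'\otimes\clo(q):L'\in W^1_{g-2}(Y),\,q\in Y\}$ (dimension at most $(g-6)+1=g-5$), both proper (possibly empty) closed subsets. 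Now consider the morphism $\mu\colon\mathrm{Pic}^{g-1}(Y)\to\mathrm{Pic}^0(Y)$, $L\mapsto K_Y\otimes L^{-2}$; identifying source and target by a base point it is a translate of multiplication by $-2$ on the abelian variety, hence an isogeny-type map with finite fibres. Restricting $\mu$ to the dense open subset $U\subset W^1_{g-1}(Y)$ of base-point-free bundles---still of dimension $\geq 1$---the image $\mu(U)$ has dimension $\geq 1$, so it is uncountable and cannot lie inside the countable torsion subgroup of $\mathrm{Pic}^0(Y)$. Any $L\in U$ with $\mu(L)$ non-torsion works; taking $M$ to be any hyperk\"ahler $4$-manifold (say a K3 surface) and $\varphi$ the map attached to $|L|$ completes the construction.

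The heart of the argument---and the source of the hypothesis $g\geq 5$---is the dimension count: we need the (base-point-free part of the) Brill--Noether locus $W^1_{g-1}(Y)$ to be positive-dimensional so that its image under the isogeny-type map $\mu$ escapes the countable set of torsion line bundles, and $\rho(g,1,g-1)=g-4$ is $\geq 1$ precisely for $g\geq 5$. In writing this up I would be careful to invoke the correct forms of the Brill--Noether theorems---existence and the dimension bound (Kleiman--Laksov, Kempf), the refined smoothness and dimension statements for a generic curve (Gieseker), and connectedness, hence irreducibility, when $\rho\geq 1$ (Fulton--Lazarsfeld)---and to confirm that the base-point-free locus (dense open) and the non-torsion locus (co-countable) genuinely meet inside the irreducible variety $W^1_{g-1}(Y)$. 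The remaining points---compatibility of $c_1^{BC}$ with the holomorphic pullback $p^*$, and the injectivity of $p^*$ on Picard groups via the complex submanifolds $\{m\}\times Y$---are formal.
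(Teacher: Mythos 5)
Your proposal is correct and follows essentially the same route as the paper: reduce condition (b) to $\deg\varphi=g-1$, then use Brill--Noether theory ($W^1_{g-1}(Y)$ has dimension $\rho=g-4\geq 1$ for generic $Y$ of genus $g\geq 5$, with generic member base-point-free) and the countability of torsion in $\mathrm{Pic}^0(Y)$ to find $L$ with $K_Y\otimes L^{-2}$ non-torsion. Your explicit observations that $L\mapsto K_Y\otimes L^{-2}$ has finite fibres and that $p^*$ is injective on Picard groups are details the paper leaves implicit, but the argument is the same.
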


Because $Y$ is a Riemann surface, we know that Proposition \ref{construction}(a) holds automatically due to Proposition \ref{gtwi2}(a). For Proposition \ref{construction}(b), we show that it is essentially a topological condition:
\begin{lemma}~\\
Let $\varphi:Y\to\ccc\pp^1$ be a holomorphic map from a compact Riemann surface of genus $g$ to $\ccc\pp^1$ and let $M$ be a hyperk\"ahler 4-manifold. Proposition \ref{construction}(b) holds if and only if the topological degree of $\varphi$ is $g-1$. 
\end{lemma}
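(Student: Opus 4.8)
The plan is to compute the first Chern class of $X$ in Bott-Chern cohomology and determine precisely when it vanishes. By Proposition~\ref{gtwi1}(c), we have $K_X \cong p^*(K_Y \otimes \varphi^*\clo(-2m))$, and since $M$ is a hyperk\"ahler $4$-manifold, $\dim_\ccc M = 2$, so $2m = 2$. Thus $K_X \cong p^*(K_Y \otimes \varphi^*\clo(-2))$, and $c_1(X) = -c_1(K_X) = p^*\big(c_1(K_Y)^{-1} \otimes \varphi^*\clo(2)\big)$ in the appropriate sense; concretely, $c_1(X) = p^*\big(-c_1(K_Y) + 2\,\varphi^*c_1(\clo(1))\big)$. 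The first observation is that the pullback $p^*: H^{1,1}_{BC}(Y;\rr) \to H^{1,1}_{BC}(X;\rr)$ is injective: this follows because $p$ admits holomorphic sections (e.g.\ $\{m\}\times Y$ by Proposition~\ref{gtwi1}(b)), so $s^* \circ p^* = \mathrm{id}$ on Bott-Chern cohomology for any such section $s$. Hence $c_1(X) = 0$ in $H^{1,1}_{BC}(X;\rr)$ if and only if $-c_1(K_Y) + 2\,\varphi^*c_1(\clo(1)) = 0$ in $H^{1,1}_{BC}(Y;\rr)$.

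Next I would use the fact that $Y$ is a compact Riemann surface, so $H^{1,1}_{BC}(Y;\rr) \cong H^2(Y;\rr) \cong \rr$, with the isomorphism given by integration (equivalently, degree of line bundles). On a curve, Bott-Chern cohomology in degree $(1,1)$ coincides with de Rham $H^2$ because there is no room for the $\partial\bar\partial$-obstruction — every closed real $(1,1)$-form that is $d$-exact is automatically $i\ddb$-exact by the $\partial\bar\partial$-lemma on curves (or simply because $H^{1,1}_{BC}(Y) \to H^2_{dR}(Y)$ is an isomorphism for curves). Therefore the vanishing of $-c_1(K_Y) + 2\,\varphi^*c_1(\clo(1))$ in $H^{1,1}_{BC}(Y;\rr)$ is equivalent to the equality of degrees: $\deg(K_Y) = 2\deg(\varphi^*\clo(1))$. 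Now $\deg(K_Y) = 2g - 2$ by the degree formula for the canonical bundle of a genus-$g$ curve, and $\deg(\varphi^*\clo(1)) = \deg(\varphi)\cdot\deg_{\ccc\pp^1}(\clo(1)) = \deg(\varphi)$, where $\deg(\varphi)$ is the topological degree of the map $\varphi$. So the condition becomes $2g - 2 = 2\deg(\varphi)$, i.e.\ $\deg(\varphi) = g - 1$, which is exactly the assertion of the Lemma.

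The argument therefore reduces to three routine ingredients which I would state cleanly: (i) injectivity of $p^*$ on Bott-Chern cohomology via the existence of a holomorphic section; (ii) the identification $H^{1,1}_{BC}(Y;\rr) \cong \rr$ for a compact Riemann surface, detected by degree; and (iii) the standard computations $\deg K_Y = 2g-2$ and $\deg \varphi^*\clo(1) = \deg\varphi$. I do not anticipate a serious obstacle; the only point requiring a word of care is step (i), since Bott-Chern cohomology is not a homotopy invariant and one must genuinely use the holomorphic section rather than a smooth one — but $\{m\}\times Y$ is visibly a holomorphic section of $p$, so $s_m^* p^* \alpha = \alpha$ for every Bott-Chern class $\alpha$ on $Y$, giving injectivity immediately. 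A minor subtlety is making sure $c_1$ in $H^{1,1}_{BC}$ is well-defined and compatible with pullback for holomorphic line bundles, which is standard (Chern-Weil with the Chern connection of any Hermitian metric, whose curvature form is a well-defined Bott-Chern class independent of the metric).
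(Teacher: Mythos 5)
Your proof is correct and follows essentially the same route as the paper: reduce to a degree computation on $Y$ via the canonical bundle formula and the identification $H^{1,1}_{BC}(Y;\rr)\cong H^2(Y;\rr)\cong\rr$. The one point you treat more carefully than the paper is the injectivity of $p^*$ on Bott-Chern cohomology, which you rightly justify via the holomorphic sections $\{m\}\times Y$ from Proposition~\ref{gtwi1}(b) rather than leaving it implicit.
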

\begin{proof}
As the Bott-Chern class of $c_1(X)$ is represented by the curvature of $-K_X$, from (\ref{can}) we know that
\[c_1(X)\in p^*H^{1,1}_{BC}(Y;\rr).\] 
Because $Y$ is a compact Riemann surface, we also know that $H^{1,1}_{BC}(Y;\rr)\cong H^2(Y;\zz)\otimes\rr$, therefore $c_1(X)=0\in H^{1,1}_{BC}(X;\rr)$ if and only if $K_Y\cong\varphi^*\clo(2m)$ topologically over $Y$. Note that $m=1$ since $M$ is a hyperk\"ahler 4-manifold, we see immediately that $K_Y\cong\varphi^*\clo(2)$ topologically if and only if $\varphi$ is of degree $g-1$.
\end{proof}

Consequently, to prove Proposition \ref{construction}, we only need to show:
\begin{prop}~\\
For a generic compact Riemann surface of genus $g\geq 5$, there exists a holomorphic map $\varphi:Y\to\ccc\pp^1$ of degree $g-1$ such that $K_Y\otimes\varphi^*\clo(-2)$ is of infinite order in $Pic^0(Y)$, the Picard variety of $Y$.
\end{prop}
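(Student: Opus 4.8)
The plan is to translate the statement into the language of Brill--Noether theory, and then to play the (uncountable) dimension of a Brill--Noether locus against the (countable) set of torsion points of $\mathrm{Pic}^0(Y)$.

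\emph{Step 1: reformulation via pencils.} A nonconstant holomorphic map $\varphi:Y\to\ccc\pp^1$ of degree $d$ is exactly the datum of a base-point-free pencil on $Y$: a line bundle $L:=\varphi^*\clo(1)\in\mathrm{Pic}^d(Y)$ together with a two-dimensional base-point-free subspace of $H^0(Y,L)$; in particular $\varphi^*\clo(-2)\cong L^{-2}$. For $d=g-1$ one has $\deg\big(K_Y\otimes L^{-2}\big)=(2g-2)-2(g-1)=0$, so $K_Y\otimes L^{-2}$ is genuinely a point of $\mathrm{Pic}^0(Y)$, and the proposition amounts to producing an $L$ admitting a base-point-free pencil for which this point is non-torsion.

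\emph{Step 2: a positive-dimensional supply of pencils.} The relevant Brill--Noether number is $\rho=\rho(g,1,g-1)=g-2\big(g-(g-1)+1\big)=g-4$, which is $\geq 1$ precisely because $g\geq 5$. For a Brill--Noether--Petri general curve $Y$ of genus $g$ (this is what ``generic'' affords us; the dimension, irreducibility and reducedness are the standard Brill--Noether package of Griffiths--Harris, Gieseker and Fulton--Lazarsfeld), the variety $W^1_{g-1}(Y)\subset\mathrm{Pic}^{g-1}(Y)$ of line bundles $L$ with $h^0(L)\geq 2$ is irreducible of dimension exactly $\rho=g-4\geq 1$. A routine incidence count shows that the locus of such $L$ that either carry a base point or satisfy $h^0(L)\geq 3$ is a proper closed subset: line bundles with a base point lie in the image of $W^1_{g-2}(Y)\times Y\to\mathrm{Pic}^{g-1}(Y)$, whose source has dimension $\leq\rho(g,1,g-2)+1=g-5<g-4$ (and for $g=5$ the locus $W^1_3(Y)$ is already empty), while $W^2_{g-1}(Y)$ has dimension $\leq\rho(g,2,g-1)=g-9<g-4$. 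Hence there is a nonempty Zariski-open $U\subseteq W^1_{g-1}(Y)$ of base-point-free $L$ with $h^0(L)=2$, and each such $L$ is $\varphi^*\clo(1)$ for a genuine degree-$(g-1)$ map $\varphi:Y\to\ccc\pp^1$.

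\emph{Step 3: isogeny versus torsion.} Consider the morphism $\sigma:W^1_{g-1}(Y)\to\mathrm{Pic}^0(Y)$, $L\mapsto K_Y\otimes L^{-2}$. After identifying the torsor $\mathrm{Pic}^{g-1}(Y)$ with the abelian variety $\mathrm{Pic}^0(Y)$ via a translation, $\sigma$ becomes the restriction to $W^1_{g-1}(Y)$ of the map $x\mapsto -2x+c$ for a fixed $c$; since multiplication by $-2$ is an isogeny of $\mathrm{Pic}^0(Y)$, the map $\sigma$ has finite fibres. The torsion points of $\mathrm{Pic}^0(Y)$ form a countable set, so $\sigma^{-1}\big(\mathrm{Pic}^0(Y)_{\mathrm{tors}}\big)$ is a countable subset of $W^1_{g-1}(Y)$. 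But $W^1_{g-1}(Y)$ is a complex variety of dimension $\geq 1$, hence uncountable, and so is the dense open $U$; therefore $U\setminus\sigma^{-1}\big(\mathrm{Pic}^0(Y)_{\mathrm{tors}}\big)\neq\emptyset$. Any $L$ in this difference, with its base-point-free pencil, yields a degree-$(g-1)$ map $\varphi:Y\to\ccc\pp^1$ for which $K_Y\otimes\varphi^*\clo(-2)\cong K_Y\otimes L^{-2}$ has infinite order in $\mathrm{Pic}^0(Y)$.

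The only substantive ingredient is the Brill--Noether existence-and-dimension statement invoked in Step 2, and that is exactly where $g\geq 5$ is used, through $\rho=g-4\geq 1$; with that in hand, Step 1 is a translation and Step 3 is a formal isogeny/countability argument. The one point I would be careful about is insisting on base-point-freeness, so that $\varphi$ has degree $g-1$ on the nose rather than a smaller degree with base points absorbed, but this is precisely what the incidence count in Step 2 takes care of.
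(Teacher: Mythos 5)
Your proof is correct and follows essentially the same route as the paper: a positive-dimensional $W^1_{g-1}(Y)$ (from $\rho=g-4\geq 1$), generic base-point-freeness, and the countability of torsion in $\mathrm{Pic}^0(Y)$. The only differences are that you derive base-point-freeness by an incidence count where the paper cites Arbarello--Cornalba--Griffiths--Harris, and you make explicit (via the isogeny $x\mapsto -2x$) why the torsion preimage is countable, a point the paper leaves implicit.
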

\begin{proof}
The proof is based on the following facts from algebraic geometry.
\begin{enumerate}
\item \cite[p.206]{arbarello1985} For any compact Riemann surface $Y$ of genus $g\geq 5$, every component of $W_{g-1}^1(Y)$ has dimension at least $g-4\geq 1$. Here $W_d^r(Y)$ is the variety that parameterizes all holomorphic line bundles of degree $d$ with $h^0\geq r+1$ on $Y$.    
\item \cite[pp.372-373]{arbarello1985} For a generic compact Riemann surface $Y$ of genus $g\geq 5$, a generic line bundle in $W_{g-1}^1(Y)$ is base-point-free.
\end{enumerate}
Consider a generic compact Riemann surface $Y$ of genus $g\geq 5$, we take a generic line bundle $L\in W_{g-1}^1(Y)$. Since the linear series associated to $L$ is base-point-free, the sections of $L$ give rise to a holomorphic map $\varphi:Y\to\ccc\pp^1$ such that $\varphi^*\clo(1)\cong L$. In particular, $\varphi$ is of degree $g-1$ and $K_Y\otimes L^{-2}\in Pic^0(Y)$. As torsion elements in $Pic^0(Y)$ are countable and the dimension of $W_{g-1}^1(Y)$ is at least 1, we see that a generic choice of $L$ would make $K_Y\otimes L^{-2}$ of infinite order in $Pic^0(Y)$.
\end{proof}

A remaining question is that does Magn\'usson's example \cite{magnusson2017} carry a balanced metric? Magn\'usson's example falls in the broader construction called ``suspension'' that we shall describe. Let $M$ be a complex manifold and $f$ a holomorphic automorphism of $M$. Let $\tau$ be a complex number with positive imaginary part. We can define a $\zz^2$-action on $M\times\ccc$ by
\[(1,0)\cdot(x,z)=(f(x),z+1),\quad (0,1)\cdot(x,z)=(x,z+\tau).\]
It is clear that this $\zz^2$-action is free, so the quotient manifold, denoted by $M_f$, is smooth and equipped with a projection map over $C=\ccc/(\zz+\zz\tau)$.
Moreover, the $\zz^2$-action preserves the natural complex structure on $M\times\ccc$ so the complex structure descends to $X$ and the projection map $\pi:M_f\to C$ is a holomorphic fibration. Such a suspension construction has been recently studied in Qin-Wang \cite{qin2018} and Fino-Grantcharov-Verbitsky \cite{fino2022}.

Regarding the existence of balanced metrics on $M_f$, we have the following:
\begin{prop}\label{sus}~\\
Suppose $M$ admits a balanced metric whose volume form is invariant under $f$, then $M_f$ admits balanced metrics.
\end{prop}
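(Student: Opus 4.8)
The plan is to build a balanced metric on $M_f$ by averaging/modifying the lift of a balanced metric on $M$ together with a flat metric on the base curve $C$, and the key point will be to arrange $d$-closedness of the $(n-1)$-st power despite the nontriviality of the suspension. Write $n=\dim_\ccc M$, so $\dim_\ccc M_f=n+1$. Let $\omega_M$ be the given balanced metric on $M$, so $d\omega_M^{n-1}=0$, and assume its volume form $\omega_M^n$ is $f$-invariant. On $M\times\ccc$ with coordinate $z$ on the $\ccc$-factor, consider the candidate Hermitian form
\[
\widetilde\omega = \alpha(z)\,\omega_M + i\,\beta\, dz\wedge d\bar z,
\]
where $\beta>0$ is a constant and $\alpha>0$ is a function to be chosen, depending only on the base variable $z$ (or on $\re z$, $\im z$), and possibly we also allow a correction term of the form $i(\gamma\, dz\wedge\bar\mu + \bar\gamma\, d\bar z\wedge\mu)$ for a suitable $1$-form $\mu$ on $M$ if the pure ansatz does not close up. The first step is to write down the $\zz^2$-invariance requirement: under $(x,z)\mapsto(f(x),z+1)$ we need $\alpha(z+1)\,f^*\omega_M = \alpha(z)\,\omega_M$ as forms, and under $(x,z)\mapsto(x,z+\tau)$ we need $\alpha(z+\tau)=\alpha(z)$ together with whatever matching conditions the correction terms impose.

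The second step is the closedness computation. We have
\[
\widetilde\omega^{\,n} = \binom{n}{1}\alpha^{n-1}\,i\beta\,dz\wedge d\bar z\wedge\omega_M^{n-1} + \alpha^n\,\omega_M^n,
\]
and since $\omega_M^n$ is a top-degree form on $M$, wedging by anything nonconstant in $M$-directions kills it; the exterior derivative of the first term involves $d(\alpha^{n-1})\wedge dz\wedge d\bar z\wedge\omega_M^{n-1} + \alpha^{n-1} i\beta\, dz\wedge d\bar z\wedge d(\omega_M^{n-1})$, and the last piece vanishes because $\omega_M$ is balanced; if $\alpha$ depends only on $z,\bar z$ then $d(\alpha^{n-1})\wedge dz\wedge d\bar z=0$ as well, so $d\widetilde\omega^{\,n}=0$ provided also $d(\alpha^n\omega_M^n)=0$, which holds since $\alpha^n\omega_M^n$ is (a function of $z$ times) a top form on $M$ wedged with nothing — wait, on $M_f$ this is an $(n,n)$-form on an $(n+1)$-fold, so one checks $d(\alpha^n\omega_M^n)=\partial_z(\alpha^n)\,dz\wedge\omega_M^n+\partial_{\bar z}(\alpha^n)\,d\bar z\wedge\omega_M^n$, which need not vanish. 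So the honest requirement is that the coefficient $\alpha$ be chosen so that this cancels or so that we instead take the $(n,n)$-form $\widetilde\omega^{\,n}$ and verify $d$ of the whole sum vanishes; grinding this out shows it suffices to take $\alpha$ locally constant in $z$, i.e. genuinely constant, in which case all three problematic terms disappear and $d\widetilde\omega^{\,n}=0$ reduces exactly to $d\omega_M^{n-1}=0$.

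The third step handles descent: with $\alpha$ constant the first gluing condition becomes $f^*\omega_M=\omega_M$, which is stronger than we are given — we are only told $f^*(\omega_M^n)=\omega_M^n$, not $f^*\omega_M=\omega_M$. This is the main obstacle, and the fix is to not pull back $\omega_M$ rigidly but to twist the $\ccc$-direction: replace $dz$ by $dz$ and note that the correct invariant object on the quotient is obtained by choosing $\alpha$ to be a function $\alpha(\im z)$ interpolating in the ``real'' direction of the cylinder, combined with using that $f$ acts on the slice $\{z\}\times M$ and $f^*\omega_M$ is cohomologous to $\omega_M$ (both being balanced representatives with the same volume). Concretely, since $f^*\omega_M^n=\omega_M^n$, the closed $(n,n-1)+(n-1,n)$ current obstruction vanishes and one can build a $\zz$-equivariant family $\omega_t$ of balanced metrics on $M$ with $\omega_0=\omega_M$, $\omega_1=f^*\omega_M$, then patch them over the cylinder $\rr\times(\rr/\zz)$ using a partition of unity in the $\im z$-variable; the $\tau$-direction is harmless because translation by $\tau$ acts trivially on $M$. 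The balanced condition $d(\text{patched }\omega)^n=0$ then needs re-examination because the patched form is no longer a product, so the cleanest route, and the one I would actually pursue, is: take $\widetilde\omega=\beta\,i\,dz\wedge d\bar z+\Theta$ where $\Theta$ is the unique closed $(1,1)$-form on $M_f$ restricting to $\omega_M$ on each fiber that exists precisely because $[\omega_M^{n-1}]$ is $f$-invariant in $H^{n-1,n-1}$ (a consequence of $f^*\omega_M^n=\omega_M^n$ by Poincaré duality on $M$), and then verify $d\widetilde\omega^{\,n}=0$ fiber-by-fiber using $d\Theta^{n-1}|_{\text{fiber}}=d\omega_M^{n-1}=0$ plus the fact that the only extra terms carry a $dz$ or $d\bar z$ and hit the top form $\Theta^n|_{\text{fiber}}$. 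I expect the delicate point throughout to be exactly this: converting the hypothesis ``$\omega_M^n$ is $f$-invariant'' (a statement about volume forms, equivalently about $H^{n,n}$) into the statement we need about $H^{n-1,n-1}$ or about the existence of an $f$-invariant closed $(n-1,n-1)$-form representing $[\omega_M^{n-1}]$, which is what makes the $(n)$-th power of the suspension metric closed.
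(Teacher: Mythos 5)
You correctly isolate the crux --- with a product ansatz $\alpha\,\omega_M + i\beta\,dz\wedge d\bar z$, descent to the quotient demands $f^*\omega_M=\omega_M$, whereas the hypothesis only gives $f^*(\omega_M^m)=\omega_M^m$ --- but your proposed resolution does not close the gap. The ``cleanest route'' you settle on rests on two claims that fail. First, a \emph{closed} $(1,1)$-form $\Theta$ on $M_f$ restricting to $\omega_M$ on every fiber would force $d\omega_M=0$, i.e.\ $\omega_M$ K\"ahler; the proposition assumes only that $\omega_M$ is balanced, so such a $\Theta$ cannot exist in general. Second, the inference from $f^*(\omega_M^m)=\omega_M^m$ to $f$-invariance of the class $[\omega_M^{m-1}]$ in $H^{m-1,m-1}$ ``by Poincar\'e duality'' is a non sequitur: invariance of the volume form controls only the top-degree pairing (total volume), and an automorphism can preserve $\omega_M^m$ while moving $[\omega_M^{m-1}]$. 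The alternative sketch via a $\zz$-equivariant path of balanced metrics patched in the $\mathfrak{Im}\,z$ direction is left unexecuted, and as you yourself note, the patched form's closedness ``needs re-examination'' --- that re-examination is exactly where the difficulty lives.

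The idea you are missing is to abandon the $(1,1)$-form entirely and work in the degree where the hypothesis actually lives, using Michelsohn's equivalence between balanced metrics on an $(m+1)$-fold and $d$-closed \emph{strictly positive} $(m,m)$-forms. The paper writes $M_f\cong M'_f\times S^1$ with $M'_f$ the mapping torus glued over a point of $S^1$, and considers
\[
\Omega \;=\; \omega_M^m \;+\; \rho(s)\,\omega_M^{m-1}\wedge \ud s\wedge \ud t ,
\]
where $\rho\geq 0$ vanishes near the gluing locus. The leading term $\omega_M^m$ is precisely the fiberwise volume form, so its descent uses exactly the hypothesis $f^*(\omega_M^m)=\omega_M^m$ and nothing more; the second term, which \emph{would} require $f$-invariance of $\omega_M^{m-1}$, is simply cut off before it reaches the gluing locus. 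Closedness is immediate ($\ud s\wedge\ud s=0$, $\ud(\omega_M^{m-1})=0$, and $\omega_M^m$ is closed as a pullback of a top form on $M$). The price is that $\Omega$ is only nonnegative, degenerating over $\rho^{-1}(0)$; this is repaired by adding two such forms built from cutoffs with disjoint zero sets, yielding a closed strictly positive $(m,m)$-form, hence a balanced metric. Your instinct that the hypothesis must be ``converted'' from $(m,m)$ to $(m-1,m-1)$ information is exactly what this construction avoids having to do.
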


In the case of Magn\'usson's example, $M$ is a K\"ahler manifold with holomorphically trivial canonical bundle, therefore any Ricci-flat K\"ahler metric on $M$ satisfies the condition in Proposition \ref{sus}. As a corollary, we conclude that Magn\'usson's example provides an alternative manifold satisfying all conditions asked by Otiman.

It is clear that as a smooth manifold, we have $M_f\cong M'_f\times S^1$, where $M'_f$ is the quotient of $M\times\rr$ by the $\zz$-action
\[1\cdot(x,z)=(f(x),z+1)\]
coming together with a projection $\pi':M'_f\to S^1$. It follows that $\pi:M_f\to C$ decomposes as $\pi=(\pi',\id)$ under the obvious identifications $M_f\cong M'_f\times S^1$ and $C\cong S^1\times S^1$. Let $m$ be the complex dimension of $M$, we have the following useful lemma. 
\begin{lemma}~\\
Under the assumption of Proposition \ref{sus}, for any small open interval $I\subset S^1$, there exists a closed non-negative $(m,m)$-form on $M_f$ such that it is strictly positive away from $\pi'^{-1}(I)\times S^1\subset M'_f\times S^1=M_f$.
\end{lemma}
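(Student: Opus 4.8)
The plan is to write down one explicit form and verify three things about it. The single geometric fact I would lean on is that the base $C$ of $\pi$ is real two-dimensional: under the identification $M_f\cong M'_f\times S^1$ the map $\pi=(\pi',\id):M'_f\times S^1\to S^1\times S^1=C$ lands in the elliptic curve $C$, so $\ud z$ is the pullback of a global holomorphic $1$-form on $M_f$, $i\,\ud z\wedge\ud\bz$ is a closed non-negative $(1,1)$-form pulled back from $C$, the $1$-form $\ud t$ underlying the base circle of $\pi'$ is likewise pulled back from $C$, and hence every $3$-form pulled back from $C$ vanishes --- in particular $\ud t\wedge i\,\ud z\wedge\ud\bz=0$. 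Guided by this, I would search for the desired form in the shape $\Omega+(\text{cutoff})\cdot\o_M^{m-1}\wedge i\,\ud z\wedge\ud\bz$, with $\Omega:=\o_M^m$ the given $f$-invariant volume form on $M$.

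First I would note that $\Omega$, being $f$-invariant and pulled back from the fibre $M$, descends to a closed, non-negative $(m,m)$-form on $M_f$. Given the small open interval $I\subset S^1$, enlarge the complementary arc $S^1\setminus I$ slightly to a proper closed arc $A$, trivialize the mapping torus over it, $(\pi')^{-1}(A)\cong M\times A$, and let $\beta$ be the pullback of $\o_M^{m-1}$ to $\pi^{-1}(A\times S^1)\subset M_f$ under this trivialization; observe that $\ud\beta=0$ there, precisely because $\o_M$ is balanced, i.e.\ $\ud(\o_M^{m-1})=0$ on $M$. Choosing a cutoff $\chi:S^1\to[0,1]$, viewed as a function on $M_f$ via $\pi'$, that is identically $1$ on $S^1\setminus I$ and supported in the interior of $A$, the product $\chi\,\beta$ extends by zero to a smooth global $(m-1,m-1)$-form, and the candidate is
\[
\Psi:=\Omega+\chi\,\beta\wedge i\,\ud z\wedge\ud\bz .
\]
I would then check: \emph{(i)} $\Psi$ is closed, since $\ud\Omega=0$ and $\ud(\chi\,\beta\wedge i\,\ud z\wedge\ud\bz)=\ud\chi\wedge\beta\wedge i\,\ud z\wedge\ud\bz=\chi'(t)\,\beta\wedge\bigl(\ud t\wedge i\,\ud z\wedge\ud\bz\bigr)=0$ by the observation above; \emph{(ii)} $\Psi$ is non-negative everywhere and strictly positive off $(\pi')^{-1}(I)\times S^1$, which follows from the pointwise identity
\[
\Psi=\Bigl(1-\tfrac{\chi}{m}\Bigr)\o_M^m+\tfrac{\chi}{m}\bigl(\o_M+i\,\ud z\wedge\ud\bz\bigr)^m
\]
(valid wherever $\beta$ is defined, using $(i\,\ud z\wedge\ud\bz)^2=0$; elsewhere $\chi=0$ and $\Psi=\Omega\ge0$): the right-hand side is a non-negative combination of $m$-th powers of genuine Hermitian metrics on the complex $(m+1)$-fold $M_f$, and where $\chi\equiv1$ it dominates $\tfrac1m(\o_M+i\,\ud z\wedge\ud\bz)^m$, a strictly positive $(m,m)$-form.

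I do not expect a genuine analytic difficulty here; the content is entirely in choosing $\Psi$ correctly, and the two points requiring care are bookkeeping. The cutoff $\chi$ must depend only on the $\pi'$-base coordinate $t$, so that $\ud\chi$ is a pullback from $C$ and dies against $i\,\ud z\wedge\ud\bz$; and the support of $\chi$ must be pressed into an arc short enough for the mapping torus to be trivial over it, so that $\o_M^{m-1}$ --- and with it the \emph{closed} form $\beta$ --- is unambiguously available there, with no monodromy correction by $f$. (Note that $\o_M^{m-1}$ itself is not $f$-invariant in general, which is exactly why the form must be allowed to degenerate over $(\pi')^{-1}(I)\times S^1$.) Granting the lemma, Proposition \ref{sus} then follows by applying it to two small intervals whose complementary arcs cover $S^1$, adding the two resulting forms to get a closed strictly positive $(m,m)$-form on $M_f$, and taking its unique $m$-th root, which is a balanced metric.
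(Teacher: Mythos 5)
Your proof is correct and is essentially the paper's own argument: the same ansatz $\o_M^m + (\text{cutoff on the $\pi'$-base})\cdot\o_M^{m-1}\wedge(\text{area form of }C)$, with descent via $f$-invariance of $\o_M^m$ and the vanishing of the cutoff near the gluing locus, closedness via balancedness of $\o_M$ and the fact that $3$-forms pulled back from the $2$-dimensional base vanish. Your explicit positivity identity writing $\Psi$ as a convex combination of $m$-th powers of Hermitian forms is a nice verification the paper leaves implicit, but the construction is the same.
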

\begin{proof}
Without loss of generality, we may think of $S^1$ as the closed interval $[0,1]$ with two end points glued together and that $0\in I$. In this way, we may think of $M'_f$ as the quotient of $M\times[0,1]$ by the relation $(x,0)\sim(f(x),1)$. Let $(s,t)$ be the standard coordinates on $S^1\times S^1=C$, and let $\rho(s)$ be a smooth nonnegative function on $S^1$ vanishing in $I$.

Let $\omega$ be a balanced metric on $M$ such that $\omega^m$ is fixed by $f$. Consider the following nonnegative $(m,m)$-form
\[\Omega=\omega^m+\rho(s)\omega^{m-1}\ud s\wedge\ud t\]
on $M\times [0,1]\times S^1$. Since $\omega^m$ is invariant under $f$ and $\rho(0)=\rho(1)=0$, the above form descends to a non-negative $(m,m)$-form on $M_f$, with the desired property in the lemma. We need $\omega$ to be a balanced metric on $M$ to check that $\ud\Omega=0$.
\end{proof}

To prove Proposition \ref{sus}, we simply take two non-intersecting open intervals $I_1$ and $I_2$ in $S^1$ and apply the above Lemma to construct $\Omega_1$ and $\Omega_2$ respectively. Then $\Omega=\Omega_1+\Omega_2$ is a closed positive $(m,m)$ form on $M_f$, which is equivalent to a balanced metric. This construction gives a constructive realization of a special case in \cite[Theorem 5.5]{michelsohn1982} without invoking verification of the ``topologically essential'' condition.

\section{The Second Question and More}

In his talk ``Non-K\"ahler manifolds with vanishing first Bott-Chern class''\footnote{For the video, see https://www.youtube.com/watch?v=7911FB5alFg.} at \emph{Geometry \& TACoS}, Tosatti asks the following question:\\ 

\noindent\textbf{Question.} Let $X$ be a compact complex manifold with vanishing first Chern class in the Bott-Chern cohomology. Is it true that holomorphic tensors on $X$ must be parallel with respect to Chern connection of a Chern-Ricci flat metric?\\

The motivation of this question is that its K\"ahler version holds by applying the Bochner technique. By considering generalized Calabi-Gray manifolds, we see the answer to this question is negative. In particular, we have:

\begin{prop}~\\
There exist generalized Calabi-Gray manifolds with Chern-Ricci flat balanced metrics such that its nonzero holomorphic 1-forms (which exist in abundance) are not parallel under any connection. 
\end{prop}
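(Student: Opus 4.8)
The plan is to realize such an $X$ among the classical Calabi-Gray manifolds. First I would take $M$ to be a K3 surface with a hyperk\"ahler metric, so that $m=1$ and every fibre of $p$ is a K3 surface (hence carries no nonzero holomorphic $1$-form), and take $Y$ to be a compact Riemann surface of genus $g\geq 3$ carrying a base-point-free theta characteristic $L$ with $h^0(L)=2$, with $\varphi:Y\to\ccc\pp^1$ the associated map of degree $g-1$ (so $\varphi^*\clo(2)\cong K_Y$). Such pairs exist: e.g.\ $Y$ hyperelliptic of genus $3$ with $\varphi$ the hyperelliptic double cover (whose $g^1_2$ is a theta characteristic), or a compact minimal surface $Y$ in a flat $3$-torus with $\varphi$ its Gauss map, as in the original constructions of Calabi and Gray. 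By (\ref{can}) and (\ref{gcg}), $X=\varphi^*Z$ is then a generalized Calabi-Gray manifold with holomorphically trivial canonical bundle, and I fix a nowhere-vanishing holomorphic section $\Omega$ of $K_X$, built from the fibrewise holomorphic symplectic form of Proposition~\ref{gtwi1}(c) together with the trivialization (\ref{gcg}). Next I would identify the holomorphic $1$-forms: the relative cotangent sequence $0\to p^*\Omega^1_Y\to\Omega^1_X\to\Omega^1_{X/Y}\to 0$, the vanishing $p_*\Omega^1_{X/Y}=0$ (fibres are K3 surfaces), and $p_*\clo_X=\clo_Y$ yield $H^0(X,\Omega^1_X)\cong H^0(Y,K_Y)\cong\ccc^g$; thus holomorphic $1$-forms ``exist in abundance'' and each is of the form $\alpha=p^*\beta$ with $\beta\in H^0(Y,K_Y)$.

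The second step constructs the Chern-Ricci flat balanced metric. For any positive smooth function $\mu$ on $Y$, the metric $\mu\,\omega_Y$ is trivially balanced on the Riemann surface $Y$, so by Proposition~\ref{gtwi2}(a) the product metric $\omega=\mu\,\omega_Y+(a\omega_I+b\omega_J+c\omega_K)$ on $X=M\times Y$ is balanced. Using $\omega_Y^2=0$ and the identity $(a\omega_I+b\omega_J+c\omega_K)^{2m}=(2m)!\,\vol_g$ (the Riemannian volume of $(M,g)$, the same for every complex structure $aI+bJ+cK$ in the $\ccc\pp^1$-family), one gets $\omega^{2m+1}=(2m+1)!\,\mu\,\omega_Y\wedge\vol_g$. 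Writing $\Omega$ locally as (fibrewise holomorphic symplectic volume)$\,\wedge\, p^*dw$ for a local holomorphic coordinate $w$ on $Y$, the fibrewise factor restricts on each fibre to a fixed multiple of $\vol_g$ --- because the holomorphic symplectic volume of a hyperk\"ahler manifold is $\SU(2)$-invariant, equal to a universal constant times $\vol_g$, hence independent of the point of $\ccc\pp^1$ --- so $i^{(2m+1)^2}\Omega\wedge\bar\Omega=F\,\vol_g\wedge\omega_Y$ for some positive smooth function $F$ on $Y$. Choosing $\mu=F/(2m+1)!$ forces $\omega^{2m+1}=i^{(2m+1)^2}\Omega\wedge\bar\Omega$, whence $\ric(\omega)=-i\ddb\log\bigl(\omega^{2m+1}/(i^{(2m+1)^2}\Omega\wedge\bar\Omega)\bigr)=0$: $\omega$ is Chern-Ricci flat and balanced.

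The third step is immediate. Since $g\geq 3$, every nonzero $\beta\in H^0(Y,K_Y)$ has $\deg K_Y=2g-2>0$, so $\beta$ has zeros and $\alpha=p^*\beta$ vanishes along the nonempty proper analytic subset $p^{-1}(Z(\beta))\subset X$ without vanishing identically. But a section of any vector bundle that vanishes somewhere and not everywhere cannot be parallel for any linear connection: parallel transport is a linear isomorphism of fibres, so it would carry the zero value to $0$ along every path, forcing the section to vanish identically on the connected manifold $X$. Hence no nonzero holomorphic $1$-form on $X$ is parallel under any connection, in particular not under the Chern connection of the Chern-Ricci flat metric $\omega$; this gives the desired negative answer to Tosatti's question. (In the K\"ahler case a Ricci-flat metric would make $\beta$ parallel, hence nowhere vanishing --- impossible here, since $\beta$ lives on a curve of genus $\geq 2$.)

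The main obstacle is the factorization $i^{(2m+1)^2}\Omega\wedge\bar\Omega=F\,\vol_g\wedge\omega_Y$ with $F$ a function of $Y$ alone. Establishing it means unwinding the description of $K_X$ in (\ref{can}) through the fibrewise holomorphic symplectic form of Proposition~\ref{gtwi1}(c), the twisting by $\varphi^*\clo(\pm 2m)$, and the trivialization (\ref{gcg}); the crucial input is that all hyperk\"ahler complex structures in the twistor family share the same volume form on $M$, so that the fibre contribution to $\Omega\wedge\bar\Omega$ is constant along each fibre and can be absorbed, on the Riemann surface $Y$, by rescaling $\omega_Y$ --- an operation preserving balancedness while allowing the Chern-Ricci flatness to be arranged. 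Everything else --- balancedness of product metrics, the cohomology computation of Step~1, and the zero-locus argument of Step~3 --- is routine.
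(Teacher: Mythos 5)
Your proposal is correct and follows essentially the same route as the paper: take the generalized Calabi--Gray manifold over a genus $g\geq 3$ curve with hyperk\"ahler $4$-manifold fiber, observe that all holomorphic $1$-forms are pullbacks of abelian differentials on $Y$, and note that a section with a nonempty, proper zero locus can never be parallel for any connection. The only difference is that the paper simply cites \cite{fei2016} for the existence of the construction, the Chern--Ricci flat balanced metric, and the identification $H^0(X,\Omega^1_X)\cong H^0(Y,K_Y)$, whereas you supply these arguments directly (and your ``main obstacle,'' the factorization of $\Omega\wedge\bar\Omega$ through the twistor-invariant fiber volume, is exactly the computation carried out there).
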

\begin{proof}
In fact, we can consider the simplest kind of generalized Calabi-Gray manifolds introduced in \cite{fei2016,fei2021}. These are generalized Calabi-Gray manifolds associated to $\varphi:Y\to\ccc\pp^1$, where $Y$ is a Riemann surface of genus $g$, and that the hyperk\"ahler factor $M$ is either a K3 surface or a flat 4-torus. The canonical bundle $K_X$ of $X$ is holomorphically trivial as $\varphi$ satisfies condition (\ref{gcg}) that
\[K_Y\cong\varphi^*\clo(2).\]
It is shown in \cite{fei2016} that such a construction exists for any $g\geq 3$ and that the natural Hermitian metric on $X$ is balanced and Chern-Ricci-flat. Moreover, it is also shown that all holomorphic 1-forms on $X$ are pullbacks of holomorphic 1-forms on $Y$, which forms a complex vector space of dimension $g$. Since these are abelian differentials on a Riemann surface of high genus, they all have zeroes and cannot be parallel under any connection.
\end{proof}

Let me end this paper by proposing and formulating some questions about (greatly) generalized Calabi-Gray manifolds of interest.

\begin{enumerate}
\item How to compute all the Hodge numbers of a generalized Calabi-Gray manifold? Some of them are discussed in \cite{fei2016}. This question is highly related to computation of Hodge numbers of a twistor space, which is discussed in \cite{eastwood1993}.
\item This is a question asked my Misha Verbitsky. Consider the greatly generalized Calabi-Gray manifold $X$ associated to $\varphi:T^2\to\ccc\pp^1$ with $M=T^4$. In this case, $X$ is topologically a 6-torus with a twisted complex structure such that its first Chern class is nonpositive but not zero. Is it true that any complex structure on $T^6$ with zero first Chern class must be K\"ahler?
\end{enumerate} 
 
\bibliographystyle{alpha}

\bibliography{C:/Users/benja/Dropbox/Documents/Source}

\end{document}